\newtheorem{theorem}{Theorem}
\newtheorem*{proposition}{Proposition}
\theoremstyle{remark}
\DeclareMathOperator{\Si}{Si}
\begin{document}

\title[]{Three Convolution Inequalities on the Real Line\\ with connections to additive combinatorics}
\keywords{Convolution, Additive Number Theory, Fourier transform.}
\subjclass[2010]{11B13, 26D10, 28A75} 
\thanks{S.S. is supported by the NSF (DMS-1763179) and the Alfred P. Sloan Foundation.}

\author[]{Richard C. Barnard}
\address{Department of Mathematics, Western Washington University, Bellingham, WA 98225}
\email{barnarr3@wwu.edu}

\author[]{Stefan Steinerberger}
\address{Department of Mathematics, Yale University, New Haven, CT 06511}
\email{stefan.steinerberger@yale.edu}

\begin{abstract} We discuss three convolution inequalities that are connected to additive combinatorics.
Cloninger and the second author showed that for nonnegative $f \in L^1(-1/4, 1/4)$,
$$ \max_{-1/2 \leq t \leq 1/2} \int_{\mathbb{R}}{f(t-x) f(x) dx} \geq 1.28 \left( \int_{-1/4}^{1/4}{f(x) dx}\right)^2$$
which is related to $g-$Sidon sets (1.28 cannot be replaced by 1.52).
 We prove a dual statement, related to difference bases, and show that for $f \in L^1(\mathbb{R})$,
$$ \min_{0 \leq t \leq 1}\int_{\mathbb{R}}{f(x) f(x+t) dx} \leq 0.42 \|f\|_{L^1}^2,$$
where the constant 1/2 is trivial, 0.42 cannot be replaced by 0.37. This suggests a natural conjecture about the asymptotic structure of $g-$difference bases. Finally, we show for all functions $f \in L^1(\mathbb{R}) \cap L^2(\mathbb{R})$,
$$ \int_{-\frac{1}{2}}^{\frac{1}{2}}{ \int_{\mathbb{R}}{f(x) f(x+t) dx}dt} \leq 0.91 \|f\|_{L^1}\|f\|_{L^2}.$$
\end{abstract}

\maketitle

\vspace{-25pt}

\section{Introduction}

We discuss three convolution inequalities on the real line; one is well known, the other two seem to be new. The common theme is that all of them are fairly trivial if we do not care about the optimal constant. The optimal constant encapsulates something difficult in additive combinatorics that is not well understood.

\subsection{The first inequality.} Our first inequality is valid for $f \in L^1(\mathbb{R}) \cap L^2(\mathbb{R})$.
Recall that Fubini's theorem shows that
$$ \int_{\mathbb{R}}  \int_{\mathbb{R}}{f(x) f(x+t) dx} dt \leq \|f\|_{L^1}^2$$
while the Cauchy-Schwarz inequality shows that for any $t \in \mathbb{R}$
$$  \int_{\mathbb{R}}{f(x) f(x+t) dx}  \leq \|f\|_{L^2}^2$$
with equality attained for $t=0$. We prove a result between these two statements.
\begin{theorem} Let $f \in L^1(\mathbb{R}) \cap L^2(\mathbb{R})$. Then
$$ \int_{-\frac{1}{2}}^{\frac{1}{2}}{ \int_{\mathbb{R}}{f(x) f(x+t) dx}dt} \leq 0.91 \|f\|_{L^1}\|f\|_{L^2}$$
Moreover, the constant cannot be replaced by 0.8.
\end{theorem}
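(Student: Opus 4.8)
The plan is to prove the upper bound by passing to the Fourier side and applying a rearrangement (bathtub) inequality, and to prove the sharpness claim by evaluating the ratio on an explicit bump function.

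First I would reduce to $f\ge 0$: since $\int_{\mathbb R}f(x)f(x+t)\,dx\le\int_{\mathbb R}|f(x)|\,|f(x+t)|\,dx$ for every $t$, replacing $f$ by $|f|$ does not decrease the left-hand side and leaves $\|f\|_{L^1},\|f\|_{L^2}$ unchanged. For $f\ge 0$ the autocorrelation $g:=f\star\tilde f\ge 0$ satisfies $\hat g=|\hat f|^{2}$, so with $\hat f(\xi)=\int f(x)e^{-2\pi i x\xi}\,dx$, hence $\widehat{\mathbf 1_{[-1/2,1/2]}}(\xi)=\frac{\sin\pi\xi}{\pi\xi}=:s(\xi)$, Plancherel gives
$$\int_{-1/2}^{1/2}\!\int_{\mathbb R}f(x)f(x+t)\,dx\,dt=\int_{\mathbb R}|\hat f(\xi)|^{2}\,s(\xi)\,d\xi .$$
I would then use only two facts about $h:=|\hat f|^{2}$: that $0\le h\le\|f\|_{L^1}^{2}$ (because $\|\hat f\|_{\infty}\le\|f\|_{L^1}$) and that $\int_{\mathbb R}h=\|f\|_{L^2}^{2}$ (Plancherel). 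After rescaling so that $\|f\|_{L^1}=1$, the theorem reduces to the scalar statement: if $0\le h\le 1$, then $\int_{\mathbb R}h\,s\le 0.91\,(\int_{\mathbb R}h)^{1/2}$.

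The core step is the bathtub principle: among all $h$ with $0\le h\le1$ and $\int h=B$ prescribed, $\int h\,s$ is largest for $h=\mathbf 1_{\{s>\lambda\}}$ with $|\{s>\lambda\}|=B$ (such a $\lambda>0$ exists because $s$ has infinitely many positive humps), so it suffices to verify $\int_{\{s>\lambda\}}s\le 0.91\,\sqrt{|\{s>\lambda\}|}$ for all $\lambda>0$. When $\lambda$ exceeds the first secondary maximum of $s$ (value $\approx 0.128$) the super-level set is a single interval $(-a,a)$, and the inequality reads $\frac2\pi\Si(\pi a)\le 0.91\sqrt{2a}$; differentiating shows that $a\mapsto\Si(\pi a)/\sqrt a$ attains its maximum, $\approx 2.017$, near $a\approx 0.685$, producing the constant $\tfrac{\sqrt2}{\pi}\cdot 2.017\approx 0.908<0.91$. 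This single-hump regime is the binding one, and its near-tightness is the crux: $0.91$ has very little slack, so the $\Si$-optimization must be carried out accurately rather than with a crude bound. For $\lambda<0.128$, where $\{s>\lambda\}$ also picks up slices of the far humps, I would use $0\le s(\xi)\le\frac{1}{\pi|\xi|}$ on $\{|\xi|>1\}$ together with the constraint $|\{s>\lambda\}|=B$ to control the tail; here $B$ is bounded below (by about $1.77$), so $0.91$ is far from tight and a routine estimate closes the case.

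For the sharpness claim it suffices to exhibit one $f$ with ratio above $0.8$. I would take the tent $f(x)=(1-|x|/L)_+$, for which $\|f\|_{L^1}=L$, $\|f\|_{L^2}^{2}=2L/3$, and
$$\int_{-1/2}^{1/2}(f\star f)(t)\,dt=\int_{\mathbb R}f(x)\Big(\int_{x-1/2}^{x+1/2}f(s)\,ds\Big)\,dx ,$$
which is a piecewise polynomial in $x$ and can be integrated in closed form. Optimizing the resulting elementary function of $L$ — the maximum occurs near $L\approx 0.6$ — gives a ratio exceeding $0.8$; a numerical search over a wider family of bumps only improves this slightly, so the tent already rules out $0.8$.
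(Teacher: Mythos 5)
Your argument is correct and, for the upper bound, it is essentially the paper's argument: both pass to the Fourier side via Wiener--Khintchine, use only the two constraints $|\widehat f|^2\le\|f\|_{L^1}^2$ and $\int|\widehat f|^2=\|f\|_{L^2}^2$, and reduce to maximizing $\Si(\pi a)/\sqrt{a}$, whose maximum $\approx 2.017$ yields the constant $\approx 0.908<0.91$. Two differences are worth noting. First, your use of the bathtub principle (the optimal $h$ is the indicator of a super-level set of $\mathrm{sinc}$, optimized over its measure $B$) is a cleaner and more rigorous organization than the paper's step ``assume $\widehat f$ equals its maximal value on an interval centered at the origin,'' and you handle the non-binding regime ($B\gtrsim 1.77$, where the level set acquires outer humps) by a tail estimate where the paper instead disposes of the case $\|f\|_{L^2}^2\ge 1.76\,\|f\|_{L^1}^2$ up front by Cauchy--Schwarz; both closures are routine and valid. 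Second, for the sharpness claim the paper uses $f(x)=e^{-ax^2}(b+cx)$ (a sign-changing function, ratio $0.802$), whereas you use the tent $f(x)=(1-|x|/L)_+$; this is a genuinely nicer choice because the autocorrelation is a cubic B-spline and everything is exactly computable: at $L=3/5$ one gets the exact ratio $\frac{8770}{10368}\cdot\frac{3\sqrt{10}}{10}\approx 0.8025>0.8$. The only caution is that this margin is thin (and e.g.\ $L=0.65$ already drops below $0.8$), so the closed-form evaluation must actually be carried out rather than estimated.
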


\subsection{Second inequality.}
The second inequality deals with a fundamental question in probability theory: if $f$ is a probability density in $(-1/4, 1/4)$, then the convolution $f*f$ is a probability density in $(-1/2, 1/2)$.
This means that the maximal value of $f*f$ has to exceed 1. How big does it necessarily have to be?  Cilleruelo, Ruzsa \& Vinuesa \cite{cill2} showed that finding the optimal constant $c$
in the inequality
\begin{equation} \label{inq} \max_{-1/2 \leq t \leq 1/2} \int_{\mathbb{R}}{f(t-x) f(x) dx} \geq c \left( \int_{-1/4}^{1/4}{f(x) dx}\right)^2, \end{equation}
is equivalent to answering an old question in additive combinatorics about the behavior of $g-$Sidon sets.
A subset $A \subset \left\{1, 2, \dots, n\right\}$ is called $g-$Sidon if 
$$ | \left\{(a,b) \in A\times A: a+b=m\right\}| \leq g$$
for every $m$. The main question is: how large can these $g-$Sidon sets be for a given $n$? Let us denote the answer by
$$ \beta_g(n) := \max_{A \subset \left\{1, 2, \dots, N\right\} \atop \mbox{\tiny A is g-Sidon}}{|A|}.$$
Cilleruelo, Ruzsa \& Vinuesa \cite{cill2} have shown that
$$\underline{\sigma(g)}\sqrt{gn}(1-o(1)) \leq \beta_g(n) \leq \overline{\sigma(g)}\sqrt{gn}(1+o(1)),$$
where the $o(1)$ is with respect to $n$ and 
$$ \lim_{g \rightarrow \infty}{\underline{\sigma(g)}} = c = \lim_{g \rightarrow \infty}{\overline{\sigma(g)}}$$
for some universal constant $c \in \mathbb{R}$ which is exactly the sharp constant in \eqref{inq}. Several
arguments have been suggested, in particular
\begin{align*}
c &\geq 1\qquad &&\mbox{trivial}\\
&\geq 1.151 \qquad &&\mbox{Cilleruelo, Ruzsa \& Trujillo \cite{cill1}, 2002} \\
&\geq  1.178 &&\mbox{Green \cite{green}, 2001}\\
&\geq  1.183 &&\mbox{Martin \& O'Bryant \cite{martin}, 2007}\\
&\geq  1.251 &&\mbox{Yu \cite{yu}, 2007}\\
&\geq  1.263 &&\mbox{Martin \& O'Bryant \cite{martin2}, 2009}\\
&\geq  1.274 &&\mbox{Matolcsi \& Vinuesa \cite{mat}, 2010}\\
&\geq  1.28 &&\mbox{Cloninger \& Steinerberger \cite{alex}, 2017}
\end{align*}
Matolcsi \& Vinuesa \cite{mat} also construct an example showing that $c \leq 1.52$, one is perhaps inclined to believe that this upper bound is close to the truth. It is this fascinating connection between additive combinatorics and real analysis that motivated us to look for a dual inequality.

\subsection{Third inequality.} The third inequality is in a similar spirit to \eqref{inq} and motivated by a problem in additive combinatorics of a similar spirit.
It is not difficult to see (`half of Fubini') that for any $f \in L^1(\mathbb{R})$
$$ \int_{0}^{1}{\int_{\mathbb{R}} f(x) f(x+t) dx dt} \leq 0.5 \|f\|_{L^1}^2$$
and the constant is sharp.
However, once we replace the average in $t$ with the minimum, the constant can be universally improved, which is our main result.

\begin{theorem}  Let $f \in L^1(\mathbb{R})$. Then 
$$ \min_{0 \leq t \leq 1}{\int_{\mathbb{R}} f(x) f(x+t) dx} \leq  0.411\|f\|_{L^1}^2$$
and the constant cannot be replaced by $0.37$. 
\end{theorem}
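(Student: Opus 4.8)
\textbf{Proof proposal.} The plan is to reduce to nonnegative $f$ and then to extract the constant from a Fourier-analytic linear program, in the same spirit as the arguments behind \eqref{inq}. Since $\int_{\mathbb R} f(x)f(x+t)\,dx \le \int_{\mathbb R}|f(x)|\,|f(x+t)|\,dx = (|f|*\widetilde{|f|})(t)$, where $\widetilde h(x)=h(-x)$, it suffices to treat $f\ge 0$, and after rescaling we may assume $\|f\|_{L^1}=1$. Put $g=f*\widetilde f$: then $g$ is nonnegative, even, $\int_{\mathbb R}g=1$, and \emph{positive definite}, since $\widehat g=|\widehat f|^2\ge 0$ (when $f\notin L^2$ a routine truncation first reduces to $g$ bounded and continuous). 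Suppose, for contradiction, that $g>c$ on $[0,1]$, hence on $[-1,1]$ by symmetry; the target is $c=0.411$.

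From $g\ge 0$ and $\int g=1$ the hypothesis already gives $\int_{-1}^1 g>2c$, hence $c<\tfrac12$ and the tail bound $\int_{\mathbb R\setminus[-1,1]}g<1-2c$; the point is to push $\tfrac12$ down to $0.411$ using positive definiteness. I would pair $g$ against a real even bounded test function $h$ through Parseval, $\int_{\mathbb R}gh=\int_{\mathbb R}\widehat g\,\widehat h$. Taking $h\ge 0$ on $[-1,1]$ gives the lower bound $\int gh > c\int_{-1}^1 h-(1-2c)\,\|h^-\|_{L^\infty(\mathbb R\setminus[-1,1])}$ via the tail bound; on the Fourier side, combining $\widehat g\ge 0$, $\widehat g(0)=1$ with the elementary estimate $|\widehat g(\xi)-c\,\widehat{\mathbf 1_{[-1,1]}}(\xi)|\le 1-2c$ (which again only uses $g>c$ on $[-1,1]$, $g\ge 0$, $\int g=1$) bounds $\int\widehat g\,\widehat h$ above by an explicit linear functional of $\widehat h$ and $c$. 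Demanding that the lower bound not exceed the upper bound yields an inequality $c\le\Phi(h)$.

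Minimizing $\Phi$ over admissible $h$ is the crux. After periodizing $g$ to pass to the circle and restricting $\widehat h$ to a fixed finite set of frequencies, this becomes a finite-dimensional program — linear once one parametrizes nonnegative trigonometric polynomials via the Fej\'er--Riesz theorem — whose optimal value I expect to be $\le 0.411$; it should then be certified by exhibiting one explicit near-optimal $h$ with rational data and checking the resulting numerical inequality rigorously. This is where I anticipate the real difficulty: exactly as for \eqref{inq}, the true value of this program is a genuinely combinatorial constant, the limiting density of $g$-difference bases, for which no closed-form extremizer is known, and all of the content lies in using positive definiteness, since any estimate that ignores it cannot beat $\tfrac12$.

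For sharpness I would construct an explicit $f$ with $\min_{0\le t\le 1}(f*\widetilde f)(t)>0.37\,\|f\|_{L^1}^2$: take $A\subset\{0,1,\dots,m\}$ a near-extremal $g$-difference basis and let $f$ be proportional to $\sum_{a\in A}\mathbf 1_{[a/m,(a+1)/m]}$ — equivalently, a suitable piecewise-constant density on a short interval obtained from the finite analogue of the problem — and compute $f*\widetilde f$ directly. Only the choice of the example requires insight; its verification is a finite computation.
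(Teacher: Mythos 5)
Your overall mechanism — reduce to $f\ge 0$, form the autocorrelation $g=f*\widetilde f$, and play its positive-definiteness ($\widehat g=|\widehat f|^2\ge 0$) against the fact that the Fourier transform of $\chi_{[-1,1]}$ goes negative — is exactly the paper's mechanism, and your reduction to nonnegative $f$ is correct. But as written the proposal does not prove the upper bound: you defer the constant to a linear program over test functions $h$ "whose optimal value I expect to be $\le 0.411$", which is a plan, not an argument. The irony is that no such program is needed and you already wrote down the closing estimate without using it. Writing $g=c\,\chi_{[-1,1]}+h$ with $h\ge 0$ and $\int h=1-2c$, evaluate $\widehat g$ at the single frequency $\xi_0$ where $\widehat{\chi_{[-1,1]}}(\xi)=\tfrac{\sin(2\pi\xi)}{\pi\xi}$ attains its minimum $-2\lambda$, $\lambda=-\inf_x \tfrac{\sin x}{x}\approx 0.2172$. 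Then $0\le \widehat g(\xi_0)\le -2c\lambda+(1-2c)$, i.e. $c\le \tfrac{1}{2(1+\lambda)}\approx 0.4108$. That one line (test function equal to a single exponential, plus the trivial bound $|\widehat h|\le\|h\|_{L^1}$) is the entire proof in the paper; your elaborate periodization/Fej\'er--Riesz apparatus is neither carried out nor necessary for the stated constant.

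The second, larger gap is the lower bound. You propose to discretize via a near-extremal $g$-difference basis, but you exhibit no such set, and whether known discrete constructions yield a ratio above $0.37$ is exactly the open question the paper raises (the equivalence between the discrete and continuous problems is conjectural there, so it cannot be cited). The paper's example is genuinely analytic: the arcsine density $f(x)=\chi_{[-1/2,1/2]}(x)(1-4x^2)^{-1/2}$, for which one computes that $\int f(x)f(x+t)\,dx\ge \pi/4$ uniformly for $|t|<1$ because the shrinking overlap of the supports as $t\to\pm 1$ is exactly compensated by the $|1/2-|x||^{-1/2}$ boundary singularities; one then trims $L^1$-mass from the middle (subtracting a scaled copy of the same density on $[-1/4,1/4]$) to push the ratio to $\pi/4\,/\,(1.439)^2>0.37$. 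That computation is the actual content of the sharpness claim and is entirely absent from your proposal.
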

This can be understood as the continuous analogue of a nice problem in additive combinatorics.
 We say that a set $A \subset \mathbb{Z}$ is a difference basis with respect to $n$ if
$$ \left\{1, \dots, n\right\} \subseteq A - A,$$
where $A - A = \left\{a_1 - a_2: a_1 \in A, a_2 \in A\right\}$. A natural question, going back to Redei \& Renyi \cite{red}, is to understand the minimal size of $A$. A trivial estimate is
$$ n = \# \left\{1, \dots, n\right\}  \leq \#((A-A) \cap \mathbb{N} ) \leq \binom{|A|}{2} \leq \frac{|A|^2}{2}$$
which shows $|A| \geq \sqrt{2 n}$. The best known results are, for $n$ large,
$$ \sqrt{2.435 n} \leq |A| \leq \sqrt{2.645n},$$
where the lower bound was recently found by Bernshteyn \& Tait \cite{bern} (improving on a 1955 bound of Leech \cite{leech}). The upper bound is a 1972 result of Golay \cite{golay}.
Golay's writes that the optimal constant ``will, undoubtedly, never be expressed in closed form''. The book of Bollobas \cite{boll} has a nice description of the problem. We also refer to
papers of Erd\H{o}s \& Gal \cite{erdos}, Haselgrove \& Leech \cite{haselgrove}. The problem has some importance in engineering, cf. the book of Pott, Kumaran, Helleseth \& Jungnickel \cite{book}.
One wonders whether the continuous analogue might also have applications.\\

Theorem 2 and its similarity to the $g-$Sidon sets suggests another natural question. Let us define a set $A \subset \mathbb{Z}$ to be a $g-$difference basis with respect to $n$ if, for all $1 \leq k \leq n$ the equation
$$ a_i - a_j = k \qquad\mbox{has at least}~g~\mbox{solutions}.$$
The natural question is how small can such a set can be? Let us define
$$ \gamma_g(n) := \min_{A \subset \mathbb{Z} \atop \mbox{\tiny A is g-difference basis}}{|A|}.$$
Analogous to the result of Cilleruelo, Ruzsa \& Vinuesa \cite{cill2}, we could possibly hope that
$$\underline{\sigma(g)}\sqrt{gn}(1-o(1)) \leq \gamma_g(n) \leq \overline{\sigma(g)}\sqrt{gn}(1+o(1)),$$
where the $o(1)$ is with respect to $n$, 
$$ \lim_{g \rightarrow \infty}{\underline{\sigma(g)}} = c = \lim_{g \rightarrow \infty}{\overline{\sigma(g)}},$$
and $c$ is the optimal constant in the inequality
$$ \min_{0 \leq t \leq 1}{\int_{\mathbb{R}} f(x) f(x+t) dx} \leq  \frac{1}{c}\|f\|_{L^1}^2.$$
It is an open question whether the optimal constant in Theorem 2 can be given in closed form.

\subsection{Two related open problems.} We conclude by describing two fascinating open problems that seem to be very related in spirit. The first seems to have been raised by Martin \& O'Bryant \cite[Conjecture 5.2.]{martin3} and asks whether there is a universal constant $c>0$ such that for all nonnegative $f \in L^1(\mathbb{R}) \cap L^2(\mathbb{R})$ there is an improved H\"older inequality
$$ \|f*f\|_{L^2}^2 \leq (1-c) \|f*f\|_{L^1} \|f*f\|_{L^{\infty}}.$$
They proposed that it might even be true for a rather large value of $c$, they proposed $c \sim 0.1174$. This was disproven by Matolcsi \& Vinuesa \cite{mat} who showed that necessarily $c \leq 0.1107$. We believe it to be a rather fascinating question. 
 The second problem may, at first glance, seem unrelated: we noted the similarity to Theorem 2 because in both proofs the constant
$$ \inf_{x \in \mathbb{R}}{\frac{\sin{x}}{x}} \sim -0.217234$$
appears naturally. The question goes back to Bourgain, Clozel \& Kahane \cite{BCK}:
If $f:\mathbb{R} \rightarrow \mathbb{R}$ is an even function such that  $f(0) \leq 0$ and $\widehat{f}(0) \leq 0$, then it is not possible for both $f$ and
$\widehat{f}$ to be positive outside an arbitrarily small neighborhood of the origin. 
Having $f$ even and real-valued guarantees that $\widehat{f}$ is real-valued and even. {The second condition yields
$$ 0 \geq \widehat{f}(0)  = \int_{-\infty}^\infty{f(x)dx} \qquad \mbox{and} \qquad 0 \geq f(0) = \int_{-\infty}^\infty{\widehat{f}(y)dy},$$
which implies that the quantities
$$A(f):=\inf~\{r>0: f(x)\geq 0 \textrm{ if } |x|>r\}$$
$$A(\widehat{f}):=\inf~\{r>0: \widehat{f}(y)\geq 0 \textrm{ if } |y|>r\}$$
are strictly positive (possibly $\infty$) unless $f \equiv 0$. There is a dilation
symmetry $x \rightarrow \lambda x$ having the reciprocal effect $y \rightarrow y/\lambda$ on the Fourier side. As a consequence, the product $A(f)A(\widehat{f})$ is
invariant under this group action and becomes a natural quantity to consider. They prove that
$$A(f)A(\widehat{f}) \geq 0.1687,$$
and $0.1687$ cannot be replaced by $0.41$. The lower bound here is given by
$$ 0.1687 \sim \frac{1}{4(1+\lambda)^2} \qquad \mbox{where} \qquad \lambda = -\inf_{x \in \mathbb{R}}{\frac{\sin{x}}{x}}.$$
Goncalves, Oliveira e Silva and the second author  \cite{felipe1} improved this to
$$A(f)A(\widehat{f})\geq 0.2025, $$
and $0.2025$ cannot be replaced by $0.353$. It was also shown in \cite{felipe1} that the sharp constant is assumed by a function and that this function has infinitely many double roots. While this question is still open, a sharp form of the inequality in $d=12$ dimensions has been established by Cohn \& Goncalves \cite{cohn} using modular forms. There is also at least a philosophical similarity to problems related to the 'unavoidable geometry of probability distributions' \cite{alon, dong, feige, feldheim, schultze, steini}.

\section{Proofs}

\subsection{Preliminaries.}
All three proofs are based on the Fourier Analysis and variants of the Hardy-Littlewood rearrangement inequality. We recall that the Hardy-Littlewood inequality states that for bounded, positive and decaying functions $f, g:\mathbb{R} \rightarrow \mathbb{R}$
$$ \int_{\mathbb{R}}{f(x)g(x)dx} \leq \int_{\mathbb{R}}{ f^*(x) g^*(x) dx},$$
where $f^*(x)$ is the symmetric decreasing rearrangement of a function $f$. If one were to draw a picture, it would show that the integral is maximized if $f$ is rearranged in such a way that the `big' parts of $f$ interact with the `big' parts of $g$ and the `small' parts of $f$ interact with the `small' parts of $g$. Over regions where one of the functions is negative, the reverse statement is true and integrals are maximized by matching big contributions of one with small contributions of the other.
In terms of Fourier Analysis, we will make use of the Wiener-Khintchine theorem: using Plancherel's identity, we see that
\begin{align*}
 \int_{\mathbb{R}}{f(x)f(x+t) dx} &= \left\langle f, f(\cdot + t) \right\rangle_{L^2} \\
&=  \left\langle \widehat{f}(\xi), e^{2 \pi i \xi t} \widehat{f}(\xi)~ \right\rangle_{L^2} = \int_{\mathbb{R}}{e^{-2 \pi i \xi t} |\widehat{f}(\xi)|^2 d\xi}.
\end{align*}
In particular, the auto-correlation cannot look like the
characteristic function of a set (which are the types of functions for which H\"older's inequality is sharp)
and it is not hard to see how these types of identities would enter. As a toy example, we show that the
auto-correlation cannot be close to $\chi_{[-1,1]}$. 
\begin{proposition} Let $f \in L^1(\mathbb{R}) \cap L^2(\mathbb{R})$. Then
$$ \left\| \int_{\mathbb{R}}{f(x)f(x+t) dx} - \chi_{[-1,1]}(t) \right\|_{L^2} \geq \frac{3}{10}.$$
\end{proposition}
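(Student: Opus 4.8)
The plan is to work entirely on the Fourier side. Write $g(t) = \int_{\mathbb{R}} f(x) f(x+t)\, dx$, so that by Wiener--Khintchine $g(t) = \int_{\mathbb{R}} e^{-2\pi i \xi t} |\widehat f(\xi)|^2 \, d\xi$; in other words, $\widehat g(\xi) = |\widehat f(\xi)|^2 \geq 0$, and moreover $g$ is continuous with $g(0) = \|f\|_{L^2}^2 = \int_{\mathbb{R}} |\widehat f(\xi)|^2 \, d\xi$. The key structural fact is that $\widehat g$ is a nonnegative integrable function. By Plancherel applied to the difference $g - \chi_{[-1,1]}$,
$$ \left\| g - \chi_{[-1,1]} \right\|_{L^2}^2 = \left\| \widehat g - \widehat{\chi_{[-1,1]}} \right\|_{L^2}^2 = \int_{\mathbb{R}} \left| |\widehat f(\xi)|^2 - \frac{\sin(2\pi\xi)}{\pi \xi} \right|^2 d\xi, $$
using $\widehat{\chi_{[-1,1]}}(\xi) = \tfrac{\sin(2\pi \xi)}{\pi \xi}$ with the paper's normalization. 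Now the right-hand side is an integral of a square against the weight-free Lebesgue measure, and the crucial observation is that $\operatorname{sinc}$ takes \emph{negative} values, while $|\widehat f|^2$ cannot. On any set where $\widehat{\chi_{[-1,1]}}(\xi) < 0$, the integrand is at least $\widehat{\chi_{[-1,1]}}(\xi)^2$, since $|\widehat f(\xi)|^2 \geq 0$ only makes the gap larger there.

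So the first step is to restrict attention to a region $\Omega \subseteq \mathbb{R}$ on which $\widehat{\chi_{[-1,1]}}$ is negative and bounded away from $0$ — for instance the symmetric pair of the first negative lobes of $\tfrac{\sin(2\pi\xi)}{\pi\xi}$, i.e. roughly $\tfrac12 < |\xi| < 1$ — and bound $\|g - \chi_{[-1,1]}\|_{L^2}^2 \geq \int_\Omega \widehat{\chi_{[-1,1]}}(\xi)^2 \, d\xi$. This is a completely explicit one-variable integral of $\sin^2(2\pi\xi)/(\pi\xi)^2$ over a fixed interval; the second step is simply to evaluate (or rigorously lower-bound) this constant and check that it exceeds $(3/10)^2 = 9/100$. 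One can enlarge $\Omega$ to include further negative lobes to gain room, though the first lobe alone should already be comfortably enough: the integral of $\operatorname{sinc}^2$ over its first negative arch is on the order of $0.1$–$0.2$, so a modest cushion remains. If needed, one can sharpen by noting that even where $\widehat{\chi_{[-1,1]}} > 0$ the competitor $|\widehat f|^2$ is constrained (it is the square of the modulus of the Fourier transform of an $L^1$ function, hence continuous and bounded by $\|f\|_{L^1}$), but I expect the negative-lobe argument alone suffices for the stated constant $3/10$.

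The only genuine subtlety — the step I would flag as the main obstacle — is the normalization bookkeeping: the value of $\widehat{\chi_{[-1,1]}}$ and hence of the explicit integral depends on the Fourier convention, and the paper uses the $e^{\pm 2\pi i \xi t}$ convention under which Plancherel is an isometry with no constants. With that convention $\widehat{\chi_{[-1,1]}}(\xi) = \int_{-1}^{1} e^{-2\pi i \xi t}\, dt = \tfrac{\sin(2\pi\xi)}{\pi\xi}$, and one must carry the factor of $2\pi$ carefully through the integration to land on the honest numerical constant. Once that is done the inequality is not tight and leaves slack, so crude bounds on the lobe integral (e.g. replacing $\sin^2$ by a convenient lower bound on a subinterval) are permissible. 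A final remark worth including: the proof shows the stronger statement that the distance is bounded below by a constant depending only on the sign pattern of $\operatorname{sinc}$, entirely independent of $f$, which is the ``auto-correlations cannot look like indicators'' phenomenon the surrounding text is illustrating.
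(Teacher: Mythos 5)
Your proposal is correct and is essentially identical to the paper's proof: pass to the Fourier side via Plancherel, use that $|\widehat f|^2\geq 0$ while $\widehat{\chi_{[-1,1]}}(\xi)=\sin(2\pi\xi)/(\pi\xi)$ has negative lobes, and lower-bound the squared distance by $\int \big(\sin(2\pi\xi)/(\pi\xi)\big)^2$ over the region where the sinc is negative (the paper bounds this by $0.1$, giving $\sqrt{0.1}>3/10$). One tiny caution: the symmetric pair of first negative lobes contributes only about $0.093$, not ``$0.1$--$0.2$'', so the margin over $(3/10)^2=0.09$ is slim and you should include further lobes (as the paper implicitly does) for comfort.
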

\begin{proof} The proof is simple: we use that the Fourier transform is unitary and obtain that
$$ \left\| \int_{\mathbb{R}}{f(x)f(x+t) dx} - \chi_{[-1,1]}(t) \right\|_{L^2} = \left\|  |\widehat{f}(\xi)|^2 - \frac{\sin{(2\pi \xi)}}{\pi \xi} \right\|_{L^2}.$$
However, one of these functions is positive while the other one becomes negative. So we clearly have at least
$$ \left\| |\widehat{f}(\xi)|^2 - \frac{\sin{(2\pi \xi)}}{\pi \xi} \right\|^2_{L^2} \geq \int_{\mathbb{R}}{ \left(\frac{\sin{(2\pi \xi)}}{\pi \xi}\right)^2 \chi_{\frac{\sin{(2\pi \xi)}}{\pi \xi} \leq 0} d\xi} \geq 0.1.$$
\end{proof}

\subsection{Proof of Theorem 1}

\begin{proof} We may assume, without loss of generality, that $f \geq 0$ and, using the invariance under scaling, that
$$ \int_{-\frac{1}{2}}^{\frac{1}{2}}{ \int_{\mathbb{R}}{f(x) f(x+t) dx}dt} =1.$$
We note that this normalisation dictates that
\begin{align*}
 1=\int_{-\frac12}^{\frac12}{ \int_{\mathbb{R}}{f(x) f(x+t) dx}dt}  \leq \int_{\mathbb{R}}^{} \int_{\mathbb{R}}{f(x) f(x+t) dx} dt = \|f\|_{L^1}^2.
\end{align*}
We distinguish two cases: the first case is
$$ \frac{1}{2}\frac{\|f\|_{L^2}^2}{\|f\|_{L^1}^{2}} \geq 0.88 \qquad \mbox{in which case} \qquad \|f\|_{L^2}^2 \geq 1.76 \|f\|_{L^1}^2 \geq 1.76$$
and we have shown the desired inequality since then
$$ \int_{-\frac{1}{2}}^{\frac{1}{2}}{ \int_{\mathbb{R}}{f(x) f(x+t) dx}dt} =1 \leq \|f\|_{L^1} \frac{\| f\|_{L^2}}{\sqrt{1.76}} \leq 0.8 \|f\|_{L^1} \|f\|_{L^2}.$$

 It remains to deal with the case where the fraction is smaller than 0.88 and we
will assume this throughout the subsequent argument.
Our main ingredient will be the Fourier transform which we use in the normalization
$$\widehat{f}(\xi) = \int_{\mathbb{R}}{ e^{-2 \pi i x \xi} f(x) dx},$$
which is the normalisation that turns it into a unitary transformation on $L^2(\mathbb{R})$. We estimate
\begin{align*}
1 = \int_{-\frac{1}{2}}^{\frac{1}{2}} \int_{\mathbb{R}}{f(x) f(x+t) dx} dt &= \int_{-\frac{1}{2}}^{\frac{1}{2}} \int_{\mathbb{R}}{ e^{-2 \pi i t \xi} |\widehat{f}(\xi)|^2 d\xi} dt\\
&=\int_{\mathbb{R}}^{}{ \frac{\sin{(\pi \xi)}}{\pi \xi}  |\widehat{f}(\xi)|^2 d\xi},
\end{align*}
We use the Hardy-Littlewood rearrangement inequality
$$ \int_{\mathbb{R}}^{}{ \frac{\sin{(\pi \xi)}}{\pi \xi}  |\widehat{f}(\xi)|^2 d\xi} \leq \int_{\mathbb{R}}^{}{ \max\left\{0,\frac{\sin{(\pi \xi)}}{\pi \xi}\right\}^* |\widehat{f}^*(\xi)|^2 d\xi}.$$
The symmetric decreasing rearrangement of the sinc function has a particularly simple form around the origin since
$$  \max\left\{0,\frac{\sin{(\pi \xi)}}{\pi \xi}\right\}^* = \frac{\sin{(\pi \xi)}}{\pi \xi} \qquad \mbox{for}~~|\xi| \leq 0.88.$$
We now estimate the rearranged Fourier transform and note that
$$ |\widehat{f}(\xi)| =  \left|  \int_{\mathbb{R}}{ e^{-2 \pi i x \xi} f(x) dx} \right| \leq  \int_{\mathbb{R}}{ |f(x)| dx} = \|f\|_{L^1}.$$
We now assume that $\widehat{f}(\xi)$ is equal to this maximal value over a large interval centered at the origin (by Hardy-Littlewood, this is a bound from above). However, we also have that $\|f\|_{L^2} = \|\widehat{f}\|_{L^2}$ which means that the Fourier transform $\widehat{f}$ can only be of size $\|f\|_{L^1}$ on an interval of total length $J$ centered at the origin where $\|f\|_{L^1}^2 J = \|f\|_{L^2}^2$. Therefore, we have
\begin{align*}
1 \leq \int_{\mathbb{R}}^{}{ \frac{\sin{(\pi \xi)}}{\pi \xi}  |\widehat{f}(\xi)|^2 d\xi} &\leq \int_{-\frac{1}{2}\|f\|_{L^2}^2\|f\|_{L^1}^{-2}}^{\frac{1}{2}\|f\|_{L^2}^2\|f\|_{L^1}^{-2}}{  \frac{\sin{(\pi \xi)}}{\pi \xi} \|f\|_{L^1}^2 d\xi}\\
&=\|f\|_{L^1}^2  \int_{-\frac{1}{2}\|f\|_{L^2}^2\|f\|_{L^1}^{-2}}^{\frac{1}{2}\|f\|_{L^2}^2\|f\|_{L^1}^{-2}}{  \frac{\sin{(\pi \xi)}}{\pi \xi}d\xi}.
\end{align*}
Introducing the special function
$$ \Si(x) = \int_0^x \frac{\sin{(y)}}{y} dy,$$
we can rewrite the result of our argument as
$$ 1 \leq  \frac{2 \|f\|_{L^1}^2}{\pi} \Si\left( \frac{\pi}{2} \frac{\|f\|_{L^2}^2}{\|f\|_{L^1}^2}\right),$$
or, making use of $\|f\|_{L^1} \geq 1$, and setting $x = 2 \|f\|_{L^1}^2 \pi^{-1}$ and $y = \|f\|_{L^2}^2$,
$$ x \Si\left(\frac{y}{x}\right) \geq 1 \qquad \mbox{where} \quad x \geq \frac{2}{\pi} \quad \mbox{and} \quad y > 0.$$
If we can show that this implies $x \cdot y  \geq 0.78$, then this this implies the main result.  We will now show that
this is indeed the case. We start by arguing that it suffices to restrict to $2/\pi \leq x \leq 0.78$ since $|\Si(x)| \leq |x|$ implies
$$ \frac{1}{x} \leq \Si\left(\frac{y}{x}\right)  \leq \frac{y}{x}$$
and thus $y \geq 1$. We now define, for $x > 2/\pi$, the function $y(x)$ as the smallest positive number such that
$$ x \Si\left(\frac{y(x)}{x}\right) = 1.$$
A quick computation shows that $y(2/\pi) = 1.226$ and thus $(2/\pi) y(2/\pi) \geq 0.78076\dots$. Differentiating the equation leads to
$$  \Si\left(\frac{y(x)}{x}\right) + x \frac{\sin{(y(x)/x)}}{y(x)/x}\left( \frac{y'(x)}{x} - \frac{y(x)}{x^2}\right) = 0.$$
This is an ordinary differential equation for $y(x)$ that gives all the necessary information. In particular, it shows that $(x y(x))' = 0.39\dots$ for $x= 2/\pi$
and thus $x y(x)$ is monotonically increasing for $x$ close to $2/\pi$. Standard error estimates then give the desired result for the entire range $2/\pi \leq x \leq 0.78$.
It remains to provide a lower bound for the constant.
If we set
$$ f(x) = e^{-a x^2},$$
then a computation shows that
$$ \frac{\int_{-\frac{1}{2}}^{\frac{1}{2}}{ \int_{\mathbb{R}}{f(x) f(x+t) dx}dt}}{\|f\|_{L^1}\|f\|_{L^2}} = \frac{(2 \pi)^{1/4} \mbox{erf}\left(\frac{\sqrt{a}}{\sqrt{8}}\right)}{a^{1/4}}$$
which yields $0.793$ for $a=7.839$. We used this as a start for a local search for functions of the type $f(x) = e^{-a x^2}(b + cx)$ and found that
$(a,b,c) = (15, 0.51, 8.55)$ results in the value 0.802.
\end{proof}

\subsection{Proof of Theorem 2}
\begin{proof} 
Let $f \in L^1(\mathbb{R})$ and suppose that for all $0 \leq t \leq 1$
$$ \int_{\mathbb{R}}{f(x)f(x+t)dx} \geq \frac{1}{2}.$$
By symmetry, this shows that the function
$$ g(t) =  \int_{\mathbb{R}}{f(x)f(x+t)dx}$$
satisfies $g(t) \geq 1/2$ for all $-1 \leq t \leq 1$.
Trivially, $\int_{\mathbb{R}}{g(t) dt} = \|f\|_{L^1}^2$. We write
$$ g(t) = \frac{1}{2} \chi_{[-1,1]} + h(t) \qquad \mbox{for some}~h(t) \geq 0.$$
Recalling the Wiener-Khintchine theorem (see \S 2.1.), we have that for any $\xi$
\begin{align*}
 0 \leq |\widehat{f}(\xi)|^2 &= \int_{\mathbb{R}}{e^{-\xi \pi i t} g(t)dt} \\
 &= \int_{\mathbb{R}}{e^{-\xi \pi i t} \left(  \frac{1}{2} \chi_{[-1,1]} + h(t) \right) dt} \\
 &\leq \frac{\sin{(2 \pi \xi)}}{2 \pi \xi}+ \int_{\mathbb{R}}{ h(t)dt}.
 \end{align*}
Optimizing over $\xi$ shows that
$$\int_{\mathbb{R}}{h(t) dt} \geq  - \inf_{x}\frac{\sin{x}}{x}$$
and thus
$$\int_{\mathbb{R}}{g(t) dt} \geq  1- \inf_{x}\frac{\sin{x}}{x}$$
and therefore
$$\| f\|_{L^1(\mathbb{R})} \geq \sqrt{1 - \inf_{x}\frac{\sin{x}}{x}} \sim 1.10328.$$
\end{proof}

\textbf{Construction of an Example.}
It remains to construct an example showing that the optimal constant in Theorem 2 cannot be less than 0.37. In light of the conjecture
made after Theorem 2 about the asymptotic structure of $g-$difference bases, we believe that extremal functions for Theorem 2 might
tell us something about the asymptotic structure of difference bases as well and are thus of interest beyond the scope of Theorem 2.\\

We now outline the reasoning behind our construction. The fact that our construction gives a very good lower bound (0.37)
almost matching our upper bound (0.41) is a good indicator that constructions of the type we consider are promising.

\begin{figure}[h!]
\begin{tikzpicture}
\draw [thick] (0.5,0) -- (4.5,0);
\draw [thick] (0.3+0.5,0) to[out=40, in=140] (2.6+0.5,0);
\draw [thick] (1.3+0.5,0) to[out=40, in=140] (3.6+0.5,0);
\draw[thick]  (5,0) -- (8,0);
\draw [thick] (5.3,0) -- (5.4,1) -- (6.3, 1) -- (6.4, 0);
\draw [thick] (6.3,0) -- (6.4,1) -- (7.3, 1) -- (7.4, 0);
\draw[thick]  (8.5,0) -- (11.5,0);
\draw[thick] (9, 1) to[out=270, in=180] (10-0.5, 0.2) to[out=0, in=270] (10, 1);
\draw[thick] (10.4-0.5, 1) to[out=270, in=180] (10+0.4, 0.2) to[out=0, in=270] (10.5+0.4, 1);
\end{tikzpicture}
\caption{Left: if the function $f$ is very flat, then the product becomes small. We thus want to have our functions concentrated. Middle: if the functions
are concentrated, then shifting it a bit leads to very little overlap leading again to small integrals. Right: if our functions has an integrable singularity at the boundary of its support, then
the integral has a chance of remaining large.}
\end{figure}
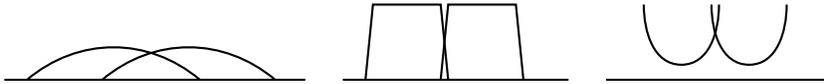
We are thus led to studying functions with compact support having a singularity on the boundary. A particularly natural such function is given by the
arcsine distribution $f:(-0.5, 0.5) \rightarrow \mathbb{R}$ given by
$$ f(x) = \frac{\chi_{[-0.5,0.5]}(x)}{\sqrt{1-4x^2}}.$$
which has many interesting properties (see e.g. \cite{coif} and references therein). In our normalization, we have
$$ \int_{\mathbb{R}}{f(x) dx} = \frac{\pi}{2}.$$
We now discuss one interesting property that we have not found in the literature and that inspired our example. For any $-1/2 < t < 1/2$, the integral
$$  \int_{\mathbb{R}}{f(x)f(x+t)dx} \qquad \mbox{is well defined}.$$
We note that for $t=0$, it is infinite (because the boundary singularities are of strength $\sim |1/2 - |x||^{-1/2}$), for all other values of $t$ the integral is finite (because
the boundary singularities by themselves are integrable). However, we would expect that the integral becomes small as $t \rightarrow  \pm 1$ (because that's when the
support of the two functions starts being very small). However, the scaling of the singularity is precisely cancels this effect!
We now carry out the associated computation.
\begin{align*}
 \int_{\mathbb{R}}{ f(x) f(x+t) dx} &= \int_{-\frac{1}{2} - t}^{\frac12}{ \frac{1}{\sqrt{1-4x^2}} \frac{1}{\sqrt{1-4(x+t)^2}}dx} \\
 &=\int_{-\frac{1}{2} - t}^{\frac12}{  \frac{1}{\sqrt{1+2x}} \frac{1}{\sqrt{1-2(x+t)}}  \frac{1}{\sqrt{1-2x}} \frac{1}{\sqrt{1+2(x+t)}} dx}.
 \end{align*}
We see that on the entire interval of integration
$$ \frac{1}{\sqrt{1+2x}} \geq \frac{1}{\sqrt{2}} \qquad \mbox{and} \qquad  \frac{1}{\sqrt{1-2(x+t)}} \geq \frac{1}{\sqrt{2}}$$
Therefore
$$  \int_{\mathbb{R}}{ f(x) f(x+t) dx}  \geq \frac{1}{2}  \int_{-\frac{1}{2} - t}^{\frac12}{  \frac{1}{\sqrt{1-2x}} \frac{1}{\sqrt{1+2(x+t)}} dx}.$$
Assuming $-1 < t < -1/2$, a linear substitution simplifies the integral to
$$ \int_{-\frac{1}{2} - t}^{\frac12}{  \frac{1}{\sqrt{1-2x}} \frac{1}{\sqrt{1+2(x+t)}} dx} = \frac{\pi}{2}.$$
and we have established 
$$  \int_{\mathbb{R}}{ f(x) f(x+t) dx} \geq \frac{\pi}{4} \qquad \mbox{for all}~0 < t < 1.$$

 It does not cover the case $t=1$ but this is not a big problem, our desired inequality is continuous under dilations
and we can thus consider, for any $\varepsilon > 0$, the function
$$ f_{\varepsilon}(x) = \frac{\chi_{[-0.5 -\varepsilon,0.5+\varepsilon]}}{\sqrt{1-\frac{x^2}{(0.5+\varepsilon)^2}}}$$
and choose $\varepsilon$ arbitrarily small. Actually computing
$$  \int_{\mathbb{R}}{ f(x) f(x+t) dx} \qquad \mbox{for various values of}~t$$
shows that the resulting function assumes its minimum on the boundary. This suggests that we can improve the function
a bit by removing some $L^1-$mass in the middle.
Numerically, for $\varepsilon$ sufficiently small
$$ f(x) = \frac{\chi_{[-0.5-\varepsilon,0.5+\varepsilon]}}{\sqrt{1-\frac{x^2}{(0.5 + \varepsilon)^2}}} - \frac{1}{4}\frac{\chi_{[-0.25,0.25]}}{\sqrt{1-4x^2}}  $$
we have
$$  \int_{\mathbb{R}}{f(x)f(x+t)dx} \geq \frac{\pi}{4} \qquad \mbox{while} \qquad \|f\|_{L^1}= 1.439$$
which shows that the constant cannot be less than 0.37. We note that, since we are removing $L^1-$mass in the middle, this does not affect any 
of the above computations for $t$ close to $\pm 1$. The remaining computations are all easy to do since everything is integrable.\\

We have found it difficult to attack the problem of constructing lower bounds numerically, this might be an interesting challenge. It would also be interesting to prove or disprove the existence of an extremizing function $f$ for this problem. We also note that having $f$ blow up like $\sim |1/2 - |x||^{-1/2}$ at the boundary seems optimal. If the blow-up is stronger, then the arising integral
is even bigger on the boundary: that does not help us but it does increase the $L^1-$mass which is not favorable. If the blow-up is weaker, then the integral tends to 0 which is also not favorable.
This motivates the following
\begin{quote}
\textbf{Open problem.} Is this reflected in the asymptotic structure of $g-$difference bases? Do they have an asymptotic density profile and does this density profile have $\sim |1/2 - |x||^{-1/2}$ blow-up at the boundary?
\end{quote}

\end{document}